\def\thetitle{{An algorithm to compute the Teichm\"uller polynomial from matrices}}
\newtheorem{thm}{Theorem}
\newtheorem*{thm*}{Theorem}
\newtheorem{prop}[thm]{Proposition}
\newtheorem{que}{Question}
\theoremstyle{remark}
\newtheorem*{rem}{Remark}
\theoremstyle{definition}
\newtheorem*{defn*}{Definition}
\newcommand\C{\mathbb{C}}
\newcommand\R{\mathbb{R}}
\newcommand\Z{\mathbb{Z}}
\begin{document}

\title\thetitle

\author{Hyungryul Baik, Chenxi Wu \\ Appendix by KyeongRo Kim and
  TaeHyouk Jo}

\date{\today}

\begin{abstract}
In their precedent work, the authors constructed closed oriented hyperbolic
surfaces with pseudo-Anosov homeomorphisms from certain class of
integral matrices. In this paper, we present a very simple algorithm 
to compute the Teichm\"uller polynomial corresponding to those surface
homeomorphisms by first constructing an invariant track whose first homology group can 
be naturally identified with the first homology group of the surface, and 
computing its Alexander polynomial.  
 \end{abstract}

\maketitle

\section{Introduction} \label{sec:intro}

For orientable 3-manifolds, Thurston defined a norm, so-called \emph{Thurston norm}, on the second homology group. In this paper, by Thurston norm, we always mean 
the dual norm on the first cohomology group. The unit norm ball for Thurston norm is a rational polytope. Thurston showed that for a top-dimensional face $F$ of the unit norm ball, 
if an integral point in the cone $F \cdot \R_+$ is represented by a fibration of the 3-manifold over $S^1$, then all integral points in the same cone are also represented by fibrations. 
In this case, $F$ is called a \emph{fibered face}, and $F \cdot \R_+$ is called a \emph{fibered cone}. 

In \cite{McM00}, McMullen defined a polynomial invariant for fibered
faces, so-called \emph{Teichm\"uller polynomial}. 
If we are given a surface $S$ with a pseudo-Anosov monodromy $\phi$, 
we will simply say the Teichm\"uller polynomial for the pair $(S, \phi)$ to denote the Teichm\"uller polynomial for the fibered cone which contains a cohomology corresponding to the 
fibration defined by $S, \phi$. 

There have been a lot of interesting applications of Teichm\"uller
polynomials 
by many authors which we do not even attempt to make a complete list
here. 
One of the most noteworthy application of Teichm\"uller polynomial lies in
the study of minimal dilatation of pseudo-Anosov surface
homeomorphisms. See, for instance, \cite{FLM11, Hiro10, KKT13,
  Hiro14, Sun15}. 

There are also other polynomial invariants developed in the literature
as analogues to Teichm\"uller polynomial. For instance, Dowdall,
Kapovich, and Leininger defined the poynomial invariant, which they
call a McMullen polynomial, for the cohomology
classes of free-by-cyclic groups in \cite{DKL16} based on
\cite{DKL15}, which is in a sense a generalization of the Teichm\"uller polynomial. 
A similar result was obtained by Algom-Kfir,
Hironaka, and Rafi in \cite{AKHR15}. 
We also note that in \cite{McM15},
McMullen used so-called clique polynomials to give a sharp lower bound
on the spectral radius of a reciprocal Perron-Frobenius matrix with a
given size.

The first step in McMullen's construction of Teichm\"uller polynomial is
to define a module from the 2-dimensional lamination (i.e. the suspension of the stable lamination of the pseudo-Anosov monodromy of 
a fibration). This 2-dimensional lamination (up to isotopy) does not depend on the choice of the
fibration of the manifold. One can replace the stable lamination by
the invariant train track, and then the 2-dimensional lamination is
replaced by a branched surface. 


The definition of Teichm{\"u}ller polynomial in \cite{McM00} is
through an algorithm of computing them based on these invariant
train tracks. However, due to the importance of this invariant other
more efficient algorithms of computation in specific cases have been
developed, e.g. \cite{LV14}. 

In this paper, we present a simple algorithm to compute Teichm\"uller
polynomial for so-called \emph{odd-block surfaces} constructed from
some $\{0,1\}$-matrices in a precedent work of the authors
\cite{BRW16}. 
The definition and construction of such surfaces will be recalled in
Section \ref{sec:oddblocksurface}. 
Odd-block surfaces form a large class of translation surfaces which come with
pseudo-Anosov self-homeomorphisms. 
We observe that for this special case, 
the Teichm\"uller polynomial is just the Alexander
polynomial of an associated finitely presented group, and the latter
is fairly easy to understand and compute. 

This reduction of the computation of the Teichm{\"u}ller polynomial to
the computation of the Alexander polynomial is described in
Proposition \ref{prop:alex-teich}. As explained in the proof of
Proposition \ref{prop:alex-teich}, what makes our algorithm
to work for odd-block surface is that
 the matrix we start with in our construction indicates the transition matrix of a Markov partition of the surface under this
  pseudo-Anosov map, and the rectangular blocks of this Markov
  partition is ordered in a very regular way which allows us to
  analyze its lift to the abelian cover easily.
In particular, one gets a natural identification between the first
homology of the surface and the first homology of the train track we
construct in the algorithm (see Step I in Section \ref{sec:algorithm}),
and this allows us to compute the Teichm\"uller polynomial via the
Alexander polynomial of the branched surface which is the suspension of
the train track we deal with. 
Also, it is essential that the train track in our case is orientable
 as explained in the proof of Proposition \ref{prop:alex-teich}. 

After we recall the set up in Section \ref{sec:oddblocksurface}, we present our algorithm in
Section \ref{sec:algorithm}   using a running example, and a give a
proof of that our algorithm works in Section \ref{sec:proof}. 

\section{ Construction of odd-block surfaces} \label{sec:oddblocksurface}

In this section, we quickly recall the construction of surfaces given
in \cite{BRW16}. We start with an aperiodic non-singular $n \times n$
matrix $M$ with only entires $0$ and $1$ satisfying so-called
\emph{odd-block} condition. Namely, $M$ is said to be an odd-block
matrix if the following two conditions are satisfied: 
\begin{itemize}
\item[(i)] In each column of $A$, the non-zero entries form one consecutive block; and
\item[(ii)] There is a map $\phi:\{0,1,...,n\}\to \{0,1,...,n\}$ such that the entry $A_{ij}$ is odd if and only if $\min\{\phi(j-1),\phi(j)\} < i \le \max\{\phi(j-1),\phi(j)\}$.
\end{itemize}

In each column, inside the consecutive block of non-zero entries, there is a consecutive sub-block of
odd entries. Moreover, the "final position" of the odd-block in a column is the "initial position" of the odd-block in 
the next column where "final position" and "initial position" are just the values of the function $\phi$ in the definition. 
Hence the odd-blocks form a snake-shape. Note that the odd-block in
some column could be empty. The concept of odd-block matrices are
originally introduced in \cite{ThursEnt}, but the name was coined in
\cite{BRW16} where the properties of odd-block matrices were further
investigated.

Let us use the following running example: 
$$ M = \begin{pmatrix} 
0 & 0 & 0 & 0 & 0 & 0 & 1 & 1 \\
0 & 0 & 0 & 0 & 1 & 1 & 1 & 1 \\
0 & 0 & 0 & 0 & 1 & 1 & 1 & 0 \\
0 & 0 & 1 & 1 & 1 & 1 & 1 & 0 \\
0 & 0 & 1 & 1 & 1 & 0 & 0 & 0 \\
1 & 1 & 1 & 1 & 1 & 0 & 0 & 0 \\
1 & 1 & 1 & 0 & 0 & 0 & 0 & 0 \\
0 & 1 & 1 & 0 & 0 & 0 & 0 & 0 \\
\end{pmatrix} $$

In the rest of the section, we will follow the recipe given in
\cite{BRW16} to construct a surface of finite type and an
orientation-preserving pseudo-Anosov
homeomorphism from the above matrix $M$. In fact, one could construct
an orientation-reserving homeomorphism but since we are going to use
only orientation-preserving homeomorphisms for the rest of the paper, 
we only recall the process of constructing orientation-preserving
pseudo-Anosov homeomorphisms. 

The eigenvalue $\vec{v}$ of $M^T$ for the leading eigenvalue normalized so that
the $L^1$-norm is 1 is (approximately) 
$$ \{0.0740679, 0.0874795, 0.187546, 0.134026, 0.21327, 0.117861, 
0.139202, 0.0465469 \}.$$ Let $v_i$ denote the $i$th entry of
$\vec{v}$. Then one can use $\vec{v}$ to get a partition $\{x_0, x_1,
\ldots, x_8\}$ of $[0,1]$ so that $x_{i} - x_{i-1} = v_{i}$ for each
$i = 1, \ldots, 8$.  Let's consider the $8 \times 8$ grid diagram 
so that the boxes corresponds to the entries of $M$ flipped upside down. 
Identify each side of this grid diagram with the closed interval $[0,1]$ so that
that diagram represents the region $[0,1]\times [0,1]$. We adjusts the heights and widths
of rows and columns of this diagram so that the $i$th column has width $v_i$ and 
$(n-i)$th row has height $v_i$ for each $i$. 

In each box with entry 1, we draw a line segment connecting the top and the bottom of the box. 
By arranging them appropriately, one can always draw a graph of a continuous
piecewise-linear map $h_M$ as in Figure \ref{piecewiselinear}. As shown in
\cite{ThursEnt}, for the map $h_M$, $M$ is an extended incident
matrix, the set $\{x_0, \ldots, x_8\}$ is contained in the post-critical set of $h_M$, and the
leading eigenvalue of $M$ is the absolute value of the slope in each
piece. Say $\lambda_M$ is the leading eigenvalue.   

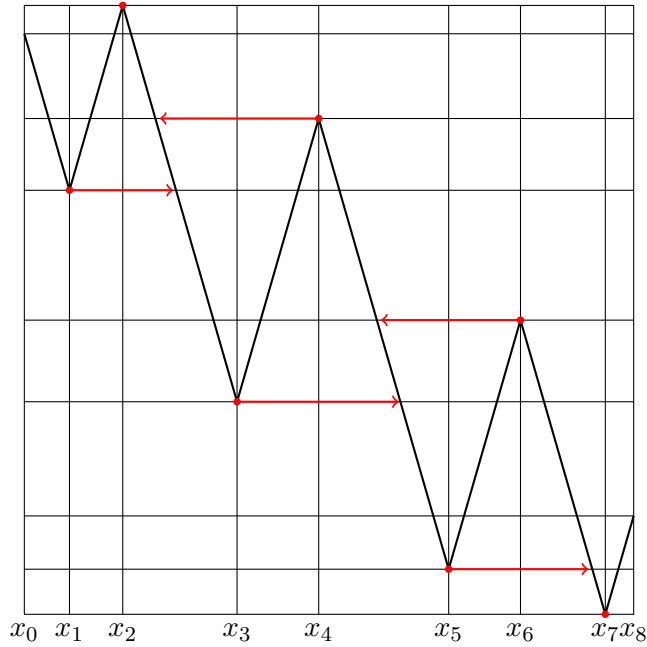
\begin{figure}[H]
  
  \begin{tikzpicture}[scale=0.6]
    \draw[-](0,0)--(13.5011,0)--(13.5011,13.5011)--(0,13.5011)--(0,0);
    \draw[-](1,0)--(1,13.5011);
    \draw[-](2.1811,0)--(2.1811,13.5011);
    \draw[-](4.7132,0)--(4.7132,13.5011);
    \draw[-](6.5227,0)--(6.5227,13.5011);
    \draw[-](9.4021,0)--(9.4021,13.5011);
    \draw[-](10.9933,0)--(10.9933,13.5011);
    \draw[-](12.8727,0)--(12.8727,13.5011);
   \draw[-](0,1)--(13.5011,1);
    \draw[-](0,2.1811)--(13.5011,2.1811);
    \draw[-](0,4.7132)--(13.5011,4.7132);
    \draw[-](0,6.5227)--(13.5011,6.5227);
    \draw[-](0,9.4021)--(13.5011,9.4021);
    \draw[-](0,10.9933)--(13.5011,10.9933);
    \draw[-](0,12.8727)--(13.5011,12.8727);
    \draw[-, thick](0,12.8727)--(1,9.4021)--(2.1811,13.5011)--(4.7132,4.7132)--(6.5227,10.9933)--(9.4021,1)--(10.9933,6.5227)--(12.8727,0)--(13.5011,2.1822);
    \draw[fill=red, red](1,9.4021) circle(2pt);
    \draw[->, red, thick](1,9.4021)--(3.3,9.4021);
    \draw[fill=red, red](2.1811,13.5011) circle(2pt);
    \draw[fill=red, red](4.7132,4.7132) circle(2pt);
    \draw[->, red, thick](4.7132,4.7132)--(8.3,4.7132);
    \draw[fill=red, red](6.5227,10.9933) circle(2pt);
    \draw[->,red, thick](6.5227,10.9933)--(3,10.9933);
    \draw[fill=red, red](9.4021,1) circle(2pt);
    \draw[->,red, thick](9.4021,1)--(12.5,1);
    \draw[fill=red, red](10.9933,6.5227) circle(2pt);
    \draw[->,red, thick](10.9933,6.5227)--(7.9,6.5227);
    \draw[fill=red, red](12.8727,0) circle(2pt);
    \node at (0,-0.4) {$x_0$};
    \node at (1,-0.4) {$x_1$};
    \node at (2.1822,-0.4) {$x_2$};
    \node at (4.7132,-0.4) {$x_3$};
    \node at (6.5227,-0.4) {$x_4$};
    \node at (9.4021,-0.4) {$x_5$};
    \node at (10.9933,-0.4) {$x_6$};
    \node at (12.8727,-0.4) {$x_7$};
    \node at (13.5011,-0.4) {$x_8$};
   \end{tikzpicture}
\caption{ The above figure shows the graph of a piecewise-linear map
  $h_M$ whose extended incident matrix is $M$. Here
  $\alpha(1)=\alpha(3)=\alpha(5)=1$, $\alpha(4)=\alpha(6)=-1$, and
  $\alpha(2),\alpha(7)$ are undefined.}
\label{fig:piecewiselinear}
\end{figure}

The red dots are $\{ (x_i, h_M(x_i) ) : i = 1,\ldots, 7\}$. One define so-called \emph{alignment function}
$\alpha: \{1, \ldots, 7\} \to \{-1, 1\}$ as follows; at each $i$, consider the horizontal line passing through 
$(x_i, h_M(x_i))$. If the horizontal line meets the graph of $h_M$ on the right side of $x_i$, then define 
$\alpha(i) = 1$, and if it meets the graph on the left side, define $\alpha(i) = -1$. If the horizontal line meets 
the graph of $h_M$ meets on both sides of $x_i$, then the construction fails. If the horizontal line does not meet
the graph at all, we leave $\alpha$ undefined at that point. In our running example, 
$\alpha(1) = \alpha(3) = \alpha(5) = 1$, $\alpha(4) = \alpha(6) = -1$, and $\alpha(2), \alpha(7)$ are undefined. 

At the moment, the alignment function $\alpha$ is defined only on a proper subset of $\{1, \ldots, 7\}$. 
As defined in \cite{BRW16}, we say $M$ satisfies the \textbf{alignment condition} if $\alpha$ can be extended to the 
entire $\{1, \ldots, 7\}$ while satisfying the following conditions; 
\begin{itemize}
\item[(a)] For critical $x_i, \,\,\alpha(i) = \begin{cases}
      \hfill -1 \hfill & \text{ if $x_i$ is a local max.} \\ 
      \hfill 1 \hfill & \text{ if $x_i$ is a local min.} \\
  		\end{cases}$
\item[(b)] For noncritical $x_i,\,\,\alpha(i) = \begin{cases}
      \hfill  \alpha(\phi_i) \hfill & \text{ if $h'(x_i)>0$.}\\
      \hfill - \alpha(\phi_i) \hfill & \text{ if $h'(x_i)<0$.}\\
  		\end{cases}$
\end{itemize}
In our example, the condition (b) is vacuously satisfied, since there is no non-critical $x_i$. 
Setting $\alpha(2) = -1$ and $\alpha(7) = 1$, we get an alignment function $\alpha$ defined at every $x_i$. 

Now let's go back to the grid diagram above but flip it along the horizontal axis once more so that the $i$th row has 
height $v_i$. We consider rectangles $R_i$, $i=1, \ldots, 8$, for the rows of this grid diagram. More precisely, 
$R_i$ is obtained from putting the boxes in $i$th row which are labeled with 1 side by side. In our example, all the 
1's in each row are consecutive, but it is not necessarily the case (see, for instance, Figure 3 of \cite{BRW16}). 
From the same grid diagram, we define another set of rectangles
$C_i$, $i=1, \ldots, 8$, for columns of the diagram. Namely, $C_i$ is
just the non-zero block in the $i$th column. 

We make a polygonal region $P_0$ by putting $R_{i+1}$ right below $R_i$ for each $i$ with the following rule; 
align $R_i, R_{i+1}$ on the left or right according to whether
$\alpha(i) = -1$ or $1$ respectively. Note that $P_0$ is the union of
$R_i$'s but at the same time the union of $C_i$'s. 

We define a piecewise-affine map $f$ from $P_o$ to $P_o$ as follows;
we map each $R_i$ to $C_i$ via an affine map which stretches $R_i$
vertically by the factor of $\lambda_M$ and compresses horizontally by
the factor of $1/\lambda_M$. When $h_M$ has negative derivative on
$[x_{i-1}, x_i]$, we compose this affine map with 180 degree
rotation. Then we obtain a piecewise-affine map on $P_0$ which is
well-defined in the interior of each $R_i$. For our example, see
Figure \ref{fig:piecewiseaffine}. 

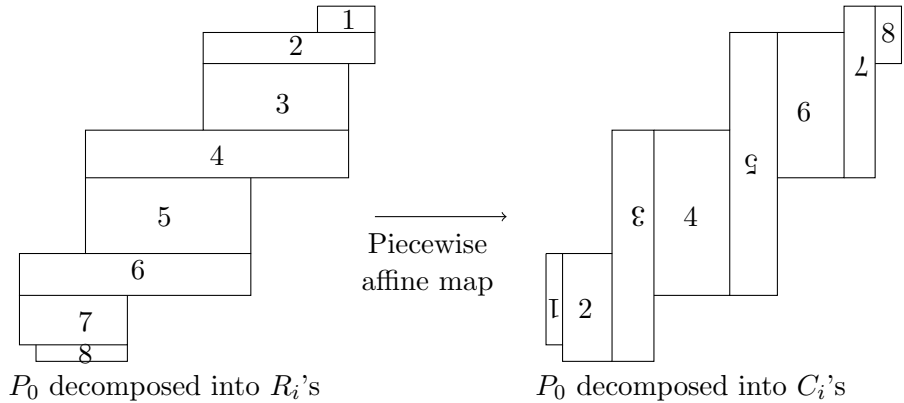
\begin{figure}[h]
  \begin{tikzpicture}[scale=0.35]
    \draw[-](20-13.5011,-9.4021)--(20-12.8727,-9.4021)--(20-12.8727,-12.8727)--(20-13.5011,-12.8727)--(20-13.5011,-9.4021);
    \draw[-](20-12.8727,-9.4021)--(20-10.9933,-9.4021)--(20-10.9933,-13.5011)--(20-12.8727,-13.5011)--(20-12.8727,-12.8727);
    \draw[-](20-10.9933,-9.4021)--(20-10.9933,-4.7132)--(20-9.4021,-4.7132)--(20-9.4021,-13.5011)--(20-10.9933,-13.5011);
    \draw[-](20-9.4021,-4.7132)--(20-6.5227,-4.7132)--(20-6.5227,-10.9933)--(20-9.4021,-10.9933);
    \draw[-](20-6.5227,-4.7132)--(20-6.5227,-1)--(20-4.7132,-1)--(20-4.7132,-10.9933)--(20-6.5227,-10.9933);
    \draw[-](20-4.7132,-1)--(20-2.1811,-1)--(20-2.1811,-6.5227)--(20-4.7132,-6.5227);
    \draw[-](20-2.1811,-1)--(20-2.1811,0)--(20-1,0)--(20-1,-6.5227)--(20-2.1811,-6.5227);
    \draw[-](20-1,0)--(20,0)--(20,-2.1811)--(20-1,-2.1811);
    \node at (20-8,-14.5) {$P_0$ decomposed into $C_i$'s};
    \draw[->](0,-8)--(5,-8);
    \node at (2,-9){Piecewise};
    \node at (2,-10.5){affine map};
    \draw[-](-2.1811,0)--(0,0)--(0,-1)--(-2.1811,-1)--(-2.1811,0);
    \draw[-](-2.1811,-1)--(-6.5227,-1)--(-6.5227,-2.1811)--(0,-2.1811)--(0,-1);
    \draw[-](-6.5227,-2.1811)--(-6.5227,-4.7132)--(-1,-4.7132)--(-1,-2.1811);
    \draw[-](-1,-4.7132)--(-1,-6.5227)--(-10.9933,-6.5227)--(-10.9933,-4.7132)--(-6.5227,-4.7132);
    \draw[-](-4.7132,-6.5227)--(-4.7132,-9.4021)--(-10.9933,-9.4021)--(-10.9933,-6.5227);
    \draw[-](-10.9933,-9.4021)--(-13.5011,-9.4021)--(-13.5011,-10.9933)--(-4.7132,-10.9933)--(-4.7132,-9.4021);
    \draw[-](-13.5011,-10.9933)--(-13.5011,-12.8727)--(-9.4021,-12.8727)--(-9.4021,-10.9933);
    \draw[-](-12.8727,-12.8727)--(-12.8727,-13.5011)--(-9.4021,-13.5011)--(-9.4021,-12.8727);
    
    \node at (-8,-14.5) {$P_0$ decomposed into $R_i$'s};
    \node at (-1,-0.5){1};
    \node at (-3,-1.5){2};
    \node at (16.5-20,-3.7){3};
    \node at (14-20,-5.7){4};
    \node at (12-20,-8){5};
    \node at (11-20,-10){6};
    \node at (9-20,-12){7};
    \node at (9-20,-13.2){8};
    \node [rotate=180] at (20-13.2,-11.5){1};
    \node at (20-12,-11.5){2};
    \node [rotate=180] at (20-10,-8){3};
    \node at (20-8,-8){4};
    \node [rotate=180] at (20-5.7,-6){5};
    \node at (20-3.7,-4){6};
    \node [rotate=180] at (20-1.5,-2.3){7};
    \node at (20-0.5,-1){8};
 \end{tikzpicture}
\caption{ A piecewise-affine map constructed from $h_M$. }
\label{fig: piecewiseaffine}
\end{figure}

Note that $f$ is not well-defined on line segments shared by
$R_i$ and $R_{i+1}$. For each such line segment, $f$ has two
images. To make $f$ well-defined, we need to identify those two
images. By doing this for every such line segment, one gets
well-defined $f$. On the other hand, this process still leaves a
problem of ill-definedness of $f^2$, so the images of the parts of
$P_0$ which are identified in the previous step need to be identified
again. Repeating this process, one gets infinitely many gluing
information on the boundary of $P_0$. By definition of $f$, these
gluing information are all on the horizontal edges of $P_0$. 

To get gluing information on the vertical edges of $P_0$, one just
repeat the same process with $f^{-1}$ instead of $f$, and focus on
the line segments shared by $C_i$'s. What the authors showed in
\cite{BRW16} is that the quotient of $P_0$ with these gluing
information on the boundary given by the map $f$ is a closed surface
equipped with a singular Euclidean metric with finitely many singular
points. By construction, $f$ is automatically a pseudo-Anosov map on
this surface.

Hence, we just obtained a translation surface with a pseudo-Anosov
homeomorphism on it. Let us call a translation surface obtained in
above process a \textbf{odd-block surface}.  

Even though one can produce a large class of examples using the
constructed described above, it seems to be not so easy to characterize
the odd-block surfaces with their intrinsic properties. We propose the
following open problem for future research. 

\begin{que} Find an interesting chracterization of odd-block surfaces. 
\end{que} 

In the rest of the paper, we will focus on the case when $n$ is even and any two consecutive entries of the eigenvector corresponding to the leading eigenvalue are different.
As shown in \cite{BRW16}  (very last part of Section 5), a careful
analysis of the gluing information on the boundary of $P_0$ reveals
that the resulting translation surface has $n+1$ cone points $Q_i$'s with cone
angle $\pi$ and a single cone point $Q$ with cone angle
$(n-1)\pi$. The assumption that two consecutive entries of the
eigenvector guarantees that the neighboring rectangles all have
different heights hence none of the $Q_i$ would disappear. 
Taking the double cover of the surface which are ramified
at $Q$ and $Q_i$'s, one gets a surface $S_g$ where the genus $g$ of
the surface is exactly $n/2$. In this case, the curves connecting
$Q_i$ and $Q_{i+1}$ in $P_0$ lift to loops which form a basis of
$H_1(S_g)$. It is also observed in \cite{BRW16} that with respect to
this basis, $M$ represents the action of the lift of the pseudo-Anosov
map constructed above on $H_1(S_g)$.

\section{An algorithm to compute Teichm\"uller polynomial for
  odd-block surfaces}  \label{sec:algorithm}

Let $n$ be an even number and $M$ be a $n \times n$ non-singular, aperiodic, odd-block
$\{0,1\}$-matrix, such that any two consecutive entries of the eigenvector corresponding to the leading eigenvalue are different. Say $S'$ is an odd-block surface and $\psi'$ is an
orientation-preserving pseudo-Anosov homeomorphism constructed as in
the previous section. 

As we remarked at the end of the last section, one can take a branched double cover $S$ of $S'$ whose
genus is exactly $n/2$, and $\psi'$ lifts to a pseudo-Anosov
homeomorphism $\psi$.  

Consider the mapping torus $N_\psi = S \times [0,1] / (x, 1) \sim
(\psi(x), 0)$. In $H^1(N_\psi, \mathbb{R})$, there exists a fibered
cone containing an integral cohomology class corresponding to the
fibration with fiber $S$ and monodromy $\psi$. 

In this case, we present an algorithm which computes the Teichm\"uller
polynomial associated with this fibered cone. 

As a running example, again we use $M$ defined in the previous
section; 
$$ M = \begin{pmatrix} 
0 & 0 & 0 & 0 & 0 & 0 & 1 & 1 \\
0 & 0 & 0 & 0 & 1 & 1 & 1 & 1 \\
0 & 0 & 0 & 0 & 1 & 1 & 1 & 0 \\
0 & 0 & 1 & 1 & 1 & 1 & 1 & 0 \\
0 & 0 & 1 & 1 & 1 & 0 & 0 & 0 \\
1 & 1 & 1 & 1 & 1 & 0 & 0 & 0 \\
1 & 1 & 1 & 0 & 0 & 0 & 0 & 0 \\
0 & 1 & 1 & 0 & 0 & 0 & 0 & 0 \\
\end{pmatrix} $$

\noindent Step I. Compute the eigenvectors $\vec v^j$ of $M$, and then for each non-zero entry $v_i^j$ of $\vec v^j$, write $\sum_j v_i^js_j$ as a superscript of every entry of $i$-th column of $M$. 

In our example, the only eigenvector $\vec v^t$ is $(1, 0, 0, 0, -1, 0, 1, 0)$. Hence the result of Step I for our example is 
$$\begin{pmatrix} 
0    & 0 & 0 & 0 & 0 & 0 & 1^s & 1 \\
0    & 0 & 0 & 0 & 1^{-s} & 1 & 1^s & 1 \\
0    & 0 & 0 & 0 & 1^{-s} & 1 & 1^s & 0 \\
0    & 0 & 1 & 1 & 1^{-s} & 1 & 1^s & 0 \\
0    & 0 & 1 & 1 & 1^{-s} & 0 & 0 & 0 \\
1^s & 1 & 1 & 1 & 1^{-s} & 0 & 0 & 0 \\
1^s & 1 & 1 & 0 & 0 & 0 & 0 & 0 \\
0    & 1 & 1 & 0 & 0 & 0 & 0 & 0 \\
\end{pmatrix} $$

This is an example where the first betti number of the mapping cylinder is 2. The case when the first betti number is 1 is trivial in that the Teichm\"uller polynomial, as well as the Alexander polynomial of the suspended train track (c.f. Proposition 1), would both be identical to the characteristic polynomial of the original matrix. Note that in this case the Steps II-IV would not change the matrix, hence this is consistent with the result of our algorithm.\\

\noindent Step II. We push all superscripts $\sum n^js_j$ to the right in each row. In this process, 
the entries of $M$ are multiplied by $t_j$ or $t^{-1}_j$ by applying the following rules repeatedly;
\begin{itemize}
\item ${}^{s_j} 1 \mapsto t_j^{s_j}$
\item ${}^{-s_j} 1 \mapsto {\left(t_j^{-1}\right)}^{-s_j}$
\end{itemize} 

The result of Step II for our example is 
$$\begin{pmatrix} 
0 & 0 & 0 & 0 & 0 & 0 & 1 & t^s \\
0 & 0 & 0 & 0 & 1 & 1/t & 1/t & 1 \\
0 & 0 & 0 & 0 & 1 & 1/t & 1/t & 0 \\
0 & 0 & 1 & 1 & 1 & 1/t & 1/t & 0 \\
0 & 0 & 1 & 1 & 1^{-s} & 0 & 0 & 0 \\
1 & t & t & t & t & 0 & 0 & 0 \\
1 & t & t^s & 0 & 0 & 0 & 0 & 0 \\
0 & 1 & 1 & 0 & 0 & 0 & 0 & 0 \\
\end{pmatrix} $$

\noindent Step III. Let $r$ be some row with superscript $\sum_j n^j s_j$ at the end. Now look at the rows above $r$ which corresponds to decreasing piece in the piecewise-linear map (see the previous section for details).  Say $r_1, \ldots, r_k$ are such rows where each row
$r_i$ has superscript $\sum_jn^j_i s_j$ at the end. Then replace the row $r$ by 
$$ r \cdot \prod_j t_j^{-\Sigma_i n^j_i} + \prod_j (t_j^{n^j} -1) \cdot \Sigma_{l=1}^k\left(\prod_j t_j^{-\Sigma_{i=1}^l n^j_i}\right) r_l .$$
and add a entry $\prod_jt_j^{n_j}-1$ to the end.

The final result for our example is 
$$\begin{pmatrix} 
0 & 0 & 0 & 0 & 0 & 0 & 1 & t & t-1\\
0 & 0 & 0 & 0 & 1 & 1/t & 1/t & 1 & 0\\
0 & 0 & 0 & 0 & 1 & 1/t & 1/t & 0 & 0\\
0 & 0 & 1 & 1 & 1 & 1/t & 1/t & 0 & 0\\
0 & 0 & 1/t & 1/t & 2/t-1 & 2/t(1/t-1) & 2/t(1/t-1) & 1/t-1 & 1/t-1\\
1 & t & t & t & t & 0 & 0 & 0 & 0\\
t & t^2 & t^2+t-1 & t^2-1 & t^2+t-2 & 2-2/t & 2-2/t & t-1 & t-1\\
0 & 1 & 1 & 0 & 0 & 0 & 0 & 0 & 0\\
\end{pmatrix} $$

\noindent Step IV. Let $M$ be the resulting matrix from Step III, and
let $N$ be the matrix of the same size as $M$ obtained by adding the
zero column to the right of the identity matrix. And take the greatest
common divisor of the largest minors of the matrix $M-xN$.

In our example, a direct computation shows that the gcd of the
largest minors is the following polynomial; 
$$  x^7 -x^6 - (2t+5+2/t)x^5 + x^4 - x^3 +(2t+5+2/t)x^2 + x -1.$$

We shall show in the next section, that:
\begin{thm}
The polynomial obtained from steps I-IV is the Teichm\"uller polynomial of the pseudo-Anosov map on $S$.
 \end{thm}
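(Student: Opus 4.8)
\emph{Overview of the plan.} I would reduce, via Proposition~\ref{prop:alex-teich}, to a combinatorial computation of an Alexander polynomial, and then recognise Steps~I--IV as the Fox-calculus evaluation of that polynomial. The geometric input is the orientable invariant train track $\tau\subset S$ obtained from the Markov partition: collapsing each rectangle $R_i$ along its stable leaves yields one branch $b_i$, the map $\psi$ carries $\tau$ into itself with transition matrix $M$ (this is the statement ``$M$ is an extended incidence matrix'' of \cite{ThursEnt}, transported to the double cover $S$), and $\tau$ is orientable because $\psi'$, hence $\psi$, was chosen orientation-preserving on the translation surface; the branches over which the $\psi$-image of $b_i$ folds orientation-reversingly are exactly those coming from the decreasing pieces of $h_M$ singled out in Step~III. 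As recalled at the end of Section~\ref{sec:oddblocksurface}, the inclusion $\tau\hookrightarrow S$ induces an isomorphism $H_1(\tau;\Z)\cong H_1(S;\Z)$ intertwining the action of $\psi$ with $M$.

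\emph{What must be computed.} Write $G=H_1(N_\psi;\Z)/\mathrm{torsion}$ and let $\widetilde N\to N_\psi$ be the cover with deck group $G$. Since $\tau$ is orientable and $H_1(\tau)\cong H_1(S)$, Proposition~\ref{prop:alex-teich} identifies the Teichm\"uller polynomial of the given fibered cone, up to a unit of $\Z[G]$, with the Alexander polynomial of $N_\psi$ with respect to $G$, i.e.\ with the greatest common divisor of the maximal minors of any presentation matrix of the $\Z[G]$-module $H_1(\widetilde N;\Z)$. So it suffices to exhibit such a presentation matrix directly in terms of $M$ and to show that Steps~I--IV output it, up to elementary row and column operations and multiplication by units of $\Z[G]$.

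\emph{Matching the algorithm.} I would fix coordinates on $G$: the fibration singles out one direction, corresponding to the variable $x$ of Step~IV, while the $\psi_*$-fixed classes in $H_1(S)$ --- that is, the eigenvectors $\vec v^{\,j}$ of $M$ with eigenvalue $1$ --- give, via Poincar\'e duality in $N_\psi$ and the intersection form on $S$, the complementary coordinate directions $t_j$ on $G$, written additively as $s_j$ in Steps~I--III. In these coordinates, Step~I records the homology datum $\sum_j v_i^j s_j$ carried by the branch $b_i$. Step~II is the change of trivialisation of the pullback of $\tau$ to $\widetilde N$ that converts these additive labels into the multiplicative $\Z[G]$-twisting of the branch-to-branch transition data, and the two displayed substitution rules are exactly this bookkeeping. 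Step~III then installs the relations coming from the suspended branched surface: a branch whose $\psi$-image sweeps over the decreasing-piece branches $r_1,\dots,r_k$ lying above it contributes, after expanding the corresponding $2$-cell of the branched surface in $\widetilde N$ and taking Fox derivatives, precisely the displayed replacement of its row $r$ by $r\cdot\prod_j t_j^{-\Sigma_i n_i^j}+\prod_j(t_j^{n^j}-1)\sum_{l=1}^k\bigl(\prod_j t_j^{-\Sigma_{i=1}^l n_i^j}\bigr)r_l$, together with one new generator --- the extra column --- carrying the coefficient $\prod_j t_j^{n_j}-1$; the factors $\prod_j(t_j^{n^j}-1)$ are the Fox derivatives of the $t_j$-twisted relators forced by the orientation reversals. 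Finally Step~IV, forming $M-xN$ and taking the gcd of its maximal minors, is the computation of the order ideal of this $\Z[G]$-module with $x$ the suspension variable, which by the previous paragraph is the Teichm\"uller polynomial.

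\emph{The main obstacle.} The hard part will be the matching in the previous paragraph, and within it two points. First, I must verify that Step~III reproduces exactly the relations of the suspended branched surface, with every sign and every power of the $t_j$ in place; this needs an explicit local model of the branched surface near each fold of $\tau$ (equivalently, near each decreasing piece of $h_M$), carried carefully through the branched double cover $S\to S'$ with orientations tracked, followed by a patient Fox-calculus translation to an Alexander matrix. Second, I must check that the data in Step~I really are the $G$-coordinates of the branches and that Steps~I--II lose no torsion; this is where the standing hypotheses enter --- ``$n$ even'' makes the ``chain'' intersection form on the curves joining consecutive cone points $Q_i$ unimodular, and ``consecutive entries of the leading eigenvector distinct'' keeps all the $Q_i$ present, so that the double cover and the identification $H_1(\tau)\cong H_1(S)$ behave as stated.
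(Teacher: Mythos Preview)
Your approach is the paper's: reduce via Proposition~\ref{prop:alex-teich} to an Alexander polynomial and then recognise Steps~I--IV as the Fox calculus on the HNN extension of $\pi_1(\tau)$ by ${\psi_1}_\ast$. Two points deserve correction or sharpening. First, Proposition~\ref{prop:alex-teich} identifies the Teichm\"uller polynomial with the Alexander polynomial of the \emph{suspended train track} $\mathcal{T}$ (equivalently, of the HNN extension of the free group $\pi_1(\tau)$), not of the $3$-manifold $N_\psi$; the proof of that proposition explicitly contrasts the two, and in general the latter is only a factor of the former. Your later paragraph in fact works with the branched-surface relations, so you end up computing the right object, but the reduction you invoke is misstated. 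Second, the paper dispatches what you call the ``main obstacle'' by writing the based train-track map explicitly: with $\gamma_i=(\prod_{j<i}\beta_j)\,\alpha_i\beta_i^{-1}\,(\prod_{j<i}\beta_j)^{-1}$ generating $\pi_1(\tau)$, one finds
\[
{\psi_1}_\ast(\gamma_i)=\Bigl(\prod_{k\in\mathcal{D}\cap\{1,\dots,i-1\}}\prod_{j\in w_k}\gamma_j\Bigr)\Bigl(\prod_{j\in w_i}\gamma_j\Bigr)\Bigl(\prod_{k\in\mathcal{D}\cap\{1,\dots,i-1\}}\prod_{j\in w_k}\gamma_j\Bigr)^{-1},
\]
the conjugator being exactly the product over decreasing pieces above row~$i$. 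Fox differentiation of the HNN relators $R_i={\psi_1}_\ast(\gamma_i)\,u\,\gamma_i^{-1}u^{-1}$ then gives, after abelianising $\gamma_i\mapsto\prod_j t_j^{v_i^j}$ and $u\mapsto x$, precisely the row replacement of Step~III together with the extra column $\partial R_i/\partial u\mapsto \prod_j t_j^{n^j}-1$; Steps~I--II are just the bookkeeping that records this abelianisation. Note also that the train track on the double cover $S$ has $2n$ edges $\alpha_i,\beta_i$ (front and back), not $n$ branches as your sketch suggests; the $n$ generators $\gamma_i$ arise from its fundamental group, not from its edge set.
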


\begin{rem}
We can verify the correctness of our algorithm in this example by
using McMullen's algorithm. The ``thickening'' construction in
the previous section gives us an Markov partition of the flat surface into 16
rectangular regions. This induces an invariant train track by associating an edge with each rectangle and a vertex with the each connected component of the union of the left edges (c.f. the proof section later). Hence, by section 3 of \cite{McM00}, we have:
\begin{equation*}
\Theta(u,t)=
{\det\left(u\mathbf{I}-\left(\begin{array}{cccccccccccccccc} 
0&0&0&0&0&0&1&t&0&0&0&0&0&0&0&0\\
0&0&0&0&0&0&0&0&0&0&0&0&1&1&1&1\\
0&0&0&0&1&1/t&1/t&0&0&0&0&0&0&0&0&0\\
0&0&0&0&0&0&0&0&0&0&1&1&1&1&1&0\\
0&0&1&1&1&0&0&0&0&0&0&0&0&0&0&0\\
0&0&0&0&0&0&0&0&1&1&1&1&1&0&0&0\\
1&t&t&0&0&0&0&0&0&0&0&0&0&0&0&0\\
0&0&0&0&0&0&0&0&0&1&1&0&0&0&0&0\\
0&0&0&0&0&0&0&0&0&0&0&0&0&0&1&1\\
0&0&0&0&1&1/t&1/t&1&0&0&0&0&0&0&0&0\\
0&0&0&0&0&0&0&0&0&0&0&0&1&1&1&0\\
0&0&1&1&1&1/t&1/t&0&0&0&0&0&0&0&0&0\\
0&0&0&0&0&0&0&0&0&0&1&1&1&0&0&0\\
1&t&t&t&t&0&0&0&0&0&0&0&0&0&0&0\\
0&0&0&0&0&0&0&0&1&1&1&0&0&0&0&0\\
0&1&1&0&0&0&0&0&0&0&0&0&0&0&0&0
\end{array}\right)\right)\over \det\left(u\mathbf{I}-\left(
\begin{array}{ccccccccc}
0&0&0&0&0&0&1&0&0\\
0&0&0&0&0&0&0&0&1\\
0&0&0&0&1&0&0&0&0\\
0&0&0&0&0&0&0&1&0\\
0&0&1&0&0&0&0&0&0\\
0&0&0&0&0&1&0&0&0\\
1&0&0&0&0&0&0&0&0\\
0&0&0&1&0&0&0&0&0\\
0&1&0&0&0&0&0&0&0
\end{array}
\right)\right)}
\end{equation*}
\begin{equation*}
={x^8 - 2x^7 + (-2t - {2\over t} - 4)x^6 + (2t + {2\over t}+ 6)x^5 -
  2x^4 + (2t +{2\over t}+6)x^3 +(-2t - {2\over t} - 4)x^2 - 2x + 1
  \over x-1}
\end{equation*}
\begin{equation*}
=x^7 -x^6 - (2t+5+2/t)x^5 + x^4 - x^3 +(2t+5+2/t)x^2 + x -1.
\end{equation*}

\end{rem}

\begin{rem}
In the case when $n$ is odd, or more generally, when the genus of the surface constructed via the procedure \cite{BRW16} is of a genus smaller than $n/2$, the above procedure can still be used to calculate the Teichm{\"u}ller polynomial so long as Step I above is modified to only consider those eigenvectors corresponding to (absolute) cohomology classes of the resulting surface.
\end{rem}

\section{Proof of the main theorem} \label{sec:proof} 

\begin{proof}[Proof of theorem 1]

\noindent Step I.  We describe the invariant train track and the train
track map.\\

As we explained in Section \ref{sec:oddblocksurface}, an odd-block
surfaces is built by gluing blocks where each block corresponds to a
column of the given matrix, say $M$, and then we take a ramified double cover so that the 
invariant foliation is orientable.  
We call a block to be ``in the front'' if it is in the polygon $P_0$ described in Section \ref{sec:oddblocksurface}, ``in the back'' if otherwise. 

 We call the part of the surface consisting of the original blocks the ``front'' and the part consisting of the other blocks the ``back''. Let ($S_g, \psi$) be the pair of the surface an the psuedo-Anosov map we obtain. 

Now we can build an invariant (topological) train track as follows: 
each block corresponds to an edge and each connected component of the union of vertical edges of the blocks corresponds to a vertex. 
See Figure \ref{fig:traintrack}. We call an edge ``in the front'' if the corresponding block is in the front, ``in the back'' if the corresponding block is in the back.

\begin{figure}
\begin{tikzpicture}
\draw[-] (0,2)--(3,2);
\draw[dotted](0,2)--(0,0);
\draw[dotted](3,2)--(3,-1);
\draw[-](0,0)--(1.4,0)--(1.4,-1)--(3,-1);
\draw[red,thick] (1.9,-1) arc (0:90:0.5);
\draw[red,thick,dashed] (1.4,-0.5) arc (90:180:0.5);
\draw[red,thick,dashed] (0.9,0) arc (180:270:0.5);
\draw[red,thick] (1.4,-0.5) arc (270:360:0.5);
\draw[red,thick](0.9,0)arc(0:90:0.9);
\draw[red,thick](1.9,0)arc(180:90:1.1);

\draw[-] (4,-1)--(7,-1);
\draw[dotted](4,-1)--(4,1);
\draw[dotted](7,-1)--(7,2);
\draw[-](4,1)--(5.4,1)--(5.4,2)--(7,2);
\draw[red,thick] (5.9,1) arc (180:270:1.1);
\draw[red,thick,dashed] (5.4,1.5) arc (-90:-180:0.5);
\draw[red,thick,dashed] (5.4,1.5) arc (90:180:0.5);
\draw[red,thick] (5.4,1.5) arc (-90:0:0.5);
\draw[red,thick](5.4,1.5)arc(90:0:0.5);
\draw[red,thick](4.9,1)arc(0:-90:0.9);
\end{tikzpicture}

\caption{construction of a train track} 
\label{fig:traintrack}
\end{figure}
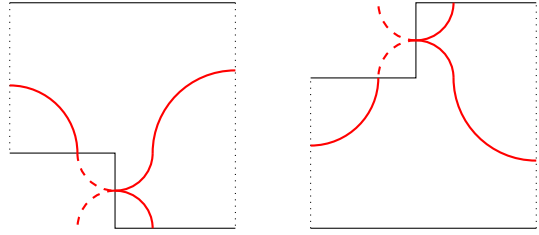

On this graph, call it $\Gamma$, we get a map, call it $\psi_\ast$, induced by the surface map $\psi$. \\

Choose the vertex corresponding to the left-edge of the left-most
rectangular block as the base point $v$. Then the invariant train track $\lambda$ is as in Figure \ref{fig:traintrack_full}

\begin{figure}
\begin{tikzpicture}[ ray/.style={decoration={markings,mark=at position .5 with {
      \arrow[>=latex]{>}}},postaction=decorate}
]
\node at (-0.1,1){$v$};
\draw[-](0,0.9)--(0,1.1);
\draw[ray](0,0.9)arc(-180:0:0.35);
\draw[ray](0,1.1)arc(180:0:0.35);
\draw[-](0.7,0.9)arc(180:0:0.1);
\draw[-](0.7,1.1)arc(-180:0:0.1);
\draw[ray](0.9,0.9)arc(-180:0:0.5);
\draw[ray](0.9,1.1)arc(180:0:0.5);
\node at (2.4,1){$\cdots$};
\draw[ray](3.1,0.9)arc(-180:0:0.45);
\draw[-](4,0.9)--(4,1.1);
\draw[ray](3.1,1.1)arc(180:0:0.45);
\node at (0.4,0.3){$\alpha_1$};
\node at (1.4,0.1){$\alpha_2$};
\node at (3.5,0.2){$\alpha_n$};
\node at (0.4,1.7){$\beta_1$};
\node at (1.4,1.9){$\beta_2$};
\node at (3.5,1.8){$\beta_n$};
\end{tikzpicture}

\caption{The invariant train track} 
\label{fig:traintrack_full}
\end{figure}
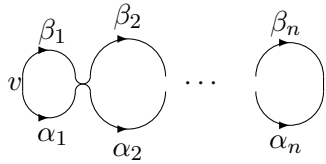

Here, $\alpha_i$ are the edges in the front, and $\beta_i$ are the edges in the back. The fundamental group of $\lambda$: $\pi_1(\lambda,v)$ is a free group generated by 
$$\gamma_i=\prod_{j\in\{1,2,\dots i-1\}}\beta_j\alpha_i\beta_i^{-1}(\prod_{j\in\{1,2,\dots i-1\}}\beta_j)^{-1}\ .$$

Here, if $S$ is an ordered set of indices, $\prod_{i\in S} g_i$ is the product of elements $g_i$ in the order determined by the order of $S$.\\

\noindent Step II. Next, we describe an isotopic train track map that fixes a base point.\\

Let the ``left-most'' vertex, $v$, of $\Gamma$ be the base point. For each loop $\gamma$ in $\Gamma$ based at $v$, $\psi_\ast(\gamma)$ is a loop conjugate to a loop based at $v$. This conjugacy can be written uniquely as follows:  for each loop, we conjugate it to a based loop through an embedded path consisting of such edges. \\

Let $\psi_1$ be the the map on $\Gamma$ which maps each based loop $\gamma$ to the conjugate based loop of $\psi_\ast(\gamma)$ as described above. Then this induces a homomorphism ${\psi_1}_\ast$ from $\pi_1(\Gamma, v)$ to itself. \\

We can now write down the map  ${\psi_1}_\ast$ under the generating set $\gamma_i$ using 1-dimensional PCF map $f$ we started with, whose incidence matrix is $M$.\\

Recall that the post-critical set decomposes the interval into $n$ segments, which we label $I_1,\dots I_n$. Let $\mathcal{I}$ be the index set of segments on which $f$ is increasing and $\mathcal{D}$ be the index set on which $f$ is decreasing, and let $w_i$ be the word (also seen as an ordered set) consisting of the indices of the edges in the image of $I_i$. Then, by the ``thickening'' construction in  Section 2, we know that the image of $\alpha_i$ under $\psi$ is $\prod_{j\in w_i}\alpha_j$ if $i\in \mathcal{I}$ and  $\prod_{j\in w_i}\beta_j$ if $i\in \mathcal{D}$, and the image of $\beta_i$ is s $\prod_{j\in w_i}\alpha_j$ if $i\in \mathcal{I}$ and  $\prod_{j\in w_i}\beta_j$ if $i\in \mathcal{D}$.\\

Hence, we have:

$${\psi_1}_\ast(\gamma_1)=\prod_{j\in w_1}\gamma_j$$
$${\psi_1}_\ast(\gamma_i)=\prod_{k\in \mathcal{D}\cap\{1,\dots i-1\}}(\prod_{j\in w_k}\gamma_j)\prod_{j\in w_i}\gamma_j(\prod_{k\in \mathcal{D}\cap\{1,\dots i-1\}}(\prod_{j\in w_k}\gamma_j))^{-1}$$

\noindent Step III. We now describe the infinite cover of the
train track graph as well as the map in II. In particular, we explain
what choices have been made in the previous steps. \\

Note that the matrix $M$ describes the induced action of $\psi$ on the first homology of $S_g$. For simplicity, suppose the dimension of the eigenspace corresponding to the eigenvalue 1 of $M$ is 1. Everything works exactly the same in the case the eigenspace has higher dimension. 

Let $\vec x$ be the eigenvector corresponding to the eigenvalue 1 of $M$. 
Now cut each edge of $\Gamma$ in the front near the right endpoint of such an edge. Consider the $\Z$-copies of this cut $\Gamma$, and enumerate as $\Gamma_i$ with $i \in \Z$. 
On each copy of $\Gamma$, the edge corresponds to the $i$-th block of the polygon $P_0$ in Section 2 is called $i$-th edge. 
Now glue $i$-th edge of $\Gamma_j$ to the $i$-th edge of $\Gamma_{j+x_i}$ for all $i$ and $j$ where $x_i$ denotes the $i$-th entry of the vector $\vec x$. 
Then we call the resulting infinite graph $\Gamma_\infty$ which is $\Z$-fold cover of $\lambda$. 
Let $\psi_\infty$ be the lift of ${\psi_1}_\ast$ to $\Gamma_\infty$ so that all the lifts of the based point $v$ are fixed.  \\

\noindent Step IV. Finally we obtain the algorithm from the map in III. \\ 

Let $G$ be the HNN extension of $\pi_1(\lambda)$ with respect to the map we obtained in the previous step, i.e.,  
$$G = <\pi_1(\lambda), u : u\gamma u^{-1} = {\psi_1}_\ast (\gamma), \forall \gamma \in \pi_1(\lambda) >. $$ 

We can now compute the Alexander polynomial of $G$ by Fox Calculus
\cite{F53} (also c.f. \cite{H97}) and the Alexander Matrix is of size
$(2g+1)\times 2g$ because the presentation above has $2g+1$ generators
($\gamma_1,\dots, \gamma_{2g}, u$) and $2g$ relations
$R_i={\psi_1}_\ast(\gamma_i)u\gamma_i^{-1}u^{-1}$, $i=1,2,\dots 2g$,
and is of the form $(M-[xI_{2g},0])^T$ where $M$ is the $2g\times (2g+1)$
matrix obtained by our algorithm and $x$ is the variable corresponding
to the generator $u$. \\

Lastly, we show that the Alexander polynomial of $G$ is the Teichm\"uller polynomial of $S$.

\begin{prop} \label{prop:alex-teich}
The Teichm\"uller polynomial for the pair $(S, \psi)$ of odd-block
surface and a pseudo-Anosov homeomorphism (obtained as in
Section 2) coincides with the Alexander polynomial for the corresponding mapping torus of $\mathcal{T}$. 
\end{prop}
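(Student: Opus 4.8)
The plan is to identify McMullen's train-track module for the Teichm\"uller polynomial with the Alexander module of the group $G$ of the suspended train track, and then to read off that their orders --- the Teichm\"uller polynomial $\theta(S,\psi)$ and the Alexander polynomial $\Delta_G$ --- agree up to the usual ambiguity of multiplication by a unit $\pm\prod t_j^{a_j}$ of $\mathbb{Z}[H]$ (below, $\doteq$ denotes equality up to such a unit). The two features of odd-block surfaces recalled in Section \ref{sec:oddblocksurface} are exactly what makes this identification go through, and both fail for a general pseudo-Anosov: the invariant train track $\tau$ (equivalently the graph $\Gamma$) is orientable once one passes to the branched double cover, and the inclusion $\tau\hookrightarrow S$ induces an isomorphism $H_1(\Gamma;\mathbb{Z})\xrightarrow{\ \sim\ }H_1(S;\mathbb{Z})$.

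First I would recall McMullen's description from \cite{McM00} (Section 3), in the form used in the Remark above: $\theta(S,\psi)\doteq\det(uI-P_E(t))/\det(uI-P_V(t))$, where $u$ is the class of the circle direction, $P_E(t)$ is the transition matrix of the train-track map $\psi_\ast\colon\Gamma\to\Gamma$ on the edge set of $\Gamma$, with each entry recording the occurrences of an edge in an image path weighted by the corresponding element of $H_1(\Gamma;\mathbb{Z})$, and $P_V(t)$ is the analogous matrix on the vertex set of $\Gamma$. Orientability of $\tau$ is precisely what allows $P_E$ and $P_V$ to be honest matrices over $\mathbb{Z}[H_1(\Gamma)]$ rather than over a ring twisted by the orientation character of the stable foliation, and the isomorphism $H_1(\Gamma)\cong H_1(S)$ identifies the coefficient ring with $\mathbb{Z}[H]$, $H=H_1(N_\psi)/\mathrm{torsion}$, so that McMullen's variables become the $t_j$ and $u$ of Steps I--II of our algorithm.

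Next I would pass to the Alexander side. The mapping torus $\mathcal{T}$ of $\psi_\ast\colon\Gamma\to\Gamma$ is a finite $2$-complex with $\pi_1(\mathcal{T})=G$, the HNN extension of Step IV; it carries a CW structure (after subdividing edges if necessary) with one $0$-cell per vertex of $\Gamma$, one $1$-cell for each vertex and each edge of $\Gamma$, and one $2$-cell per edge of $\Gamma$, and the cellular chain complex of the $H$-cover has the block form dictated by the mapping-torus structure, whose ``time-$1$'' part is $uI$ minus the twisted transition, respectively incidence, matrix. Applying Milnor's formula for the Reidemeister torsion of a mapping torus --- or, equivalently, running the Fox-calculus computation of Step IV and comparing largest minors --- one gets $\tau(\mathcal{T})=\det(uI-P_E(t))/\det(uI-P_V(t))$, and one checks that this ratio lies in $\mathbb{Z}[H]$ and represents $\Delta_G$, using $b_1(N_\psi)\ge1$ together with the standard relation between Reidemeister torsion and the first Alexander polynomial of a finite $2$-complex (distinguishing $b_1=1$ from $b_1\ge2$, and the relative from the absolute Alexander module). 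Comparing with the previous paragraph yields $\theta(S,\psi)\doteq\Delta_G$.

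The hard part is the bookkeeping underlying the last two paragraphs: one must verify that the coefficient systems really coincide --- that orientability of $\tau$ removes the orientation twist which would otherwise force McMullen's module to see the orientation double cover of $\tau$ (the point flagged in the introduction that ``it is essential that the train track in our case is orientable''), and that $\mathcal{T}\hookrightarrow N_\psi$ induces an isomorphism on $H_1$ modulo torsion, so that the variable $t$ has the same meaning on both sides (this is where the snake-shaped ordering of the odd-block matrix enters, through $H_1(\Gamma)\cong H_1(S)$). A secondary, purely algebraic point is to pin down which elementary ideal the gcd of the largest minors of $M-xN$ computes and to confirm that it equals the order of McMullen's module on the nose, rather than up to a spurious cyclotomic-type factor; this uses the normalization $\det(uI-P_V)\mid\det(uI-P_E)$ and, in the running example, the hypothesis $b_1=2$.
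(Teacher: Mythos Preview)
Your proposal is correct but takes a different route from the paper. The paper's argument is shorter: following the proof of Theorem~7.1 in \cite{McM00}, it lets $P(t)$ and $Q(t)$ denote the action of the lifted map on $Z_1(\tau,\C_t)$ and $H_1(\tau,\C_t)$ respectively, identifies the Teichm\"uller polynomial with $\det(uI-P(t))$ and the Alexander polynomial of $\mathcal{T}$ with $\det(uI-Q(t))$, and then observes that because $\tau$ is one-dimensional and \emph{orientable} one has $Z_1(\tau,\C_t)\cong H_1(\tau,\C_t)$ for every $t$ (citing Corollary~2.4 of \cite{McM00} for why this fails otherwise), whence $P(t)=Q(t)$; the identification $H_1(\tau)\cong H_1(S)$ is invoked only to match the coefficient group $H$ on both sides. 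You instead work at the level of the full cellular chain complex, expressing both sides as the edge/vertex ratio $\det(uI-P_E)/\det(uI-P_V)$ and invoking Milnor's mapping-torus torsion formula together with the torsion--Alexander relation. This is heavier machinery, but it has the virtue of connecting directly to the explicit $16\times 16$ over $9\times 9$ verification in the Remark and of making the role of the vertex matrix transparent; the paper's approach buys brevity by passing to the kernel $Z_1$ from the outset, so no division and no torsion theory are needed.
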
 
\begin{proof}
Let $A = H_1(M_\psi, \Z)/\mbox{torsion}$, and let $\hat{A} = Hom(A, \C^*)$. Also, let
$H = Hom(H^1(S, \Z)^\psi, \Z) \cong \Z^b$ be the dual of the $\psi$-invariant cohomology of $S$. 
The we have the splitting $A = H \oplus \Z$, and $(t, u)$ be the
coordinates on $\hat{A}$ adapted to the splitting. Note for our
construction of $\tau$ in Step I, there is an identification
between $H_1(S)$ and $H_1(\tau)$ (see also \cite[Theorem 4]{BRW16}). 
Hence $H$ is also the maximum
subgroup of $H_1(\tau)$ that is invariant under $\psi$. 
 
The rest of the proof is basically just copying and pasting the proof of Theorem 7.1 of
\cite{McM00}. 
Let $P(t)$ and $Q(t)$ denote the action of $\widetilde{\psi}$ on
$Z_1(\tau, \C_t)$ and $H_1(\tau, \C_t)$ respectively. Here the twisted
coefficient $\C_t$ takes value in $\C[\hat{H}]$ and corresponds to the
abelian covering defined by $\hat{H}$. 

Then by definition, the Alexander polynomial for $\mathcal{T}$ is $\det(uI - Q(t))$, and the Teichm\"uller polynomial for $(S, \psi)$
is $\det(uI - P(t))$. On the other hand, since $\tau$ is 1-dimensional
and orientable, we have $Z_1(\tau, \C_t) \cong H_1(\tau, \C_t)$ 
for any character $t \in \hat{H}$.  
Note that the orientability of $\tau$ is essential here to get this
identification between $Z_1(\tau, \C_t)$ and $H_1(\tau, \C_t)$. For
instance, by Corollary 2.4 of \cite{McM00}, when $\tau$ is
non-orientable, the rank of $Z_1(\tau, \C_t)$ is strictly smaller, so
it cannot be isomorphic to $H_1(\tau, \C_t)$. 

Note that in \cite{McM00},
$Q(t)$ denotes the action on $H_1(S, \C_t)$ not on $H_1(\tau,
\C_t)$, and the Alexander polynomial there means the Alexander
polynomial of the 3-manifold. In that case, the Alexander polynomial
is just a factor of Teichm\"uller polynomial. This is mainly due to the
fact that the map in (7.1) of \cite{McM00} is a mere surjection not an
isomorphism. On the other hand, in our case, $Q(t)$ is defined using
$H_1(\tau, \C_t)$ not $H_1(S, \C_t)$, and we use the Alexander
polynomial of the branched surface which is the suspension of the
train track. 
so the range of the map $\pi$ in  (7.1) of
\cite{McM00} becomes the same group as the domain, and we have the
isomorphism. This shows that $P(t) = Q(t)$, and the proposition follows. 
\end{proof}

This concludes the proof of the main theorem.
\end{proof}

\textbf{Remark.} In general, $H_1(S)$ is a quotient of $H_1(\tau,
\C_t)$, hence the Teichm\"uller polynomial is a specialization of the
Alexander polynomial for $\mathcal{T}$ when $\tau$ is orientable. In
our case, due to the identification between  $H_1(S)$ and $H_1(\tau)$,
this specialization is not needed. \\

\textbf{Remark.} In Step IV, say the eigenspace had dimension $k$, and we have the eigenvectors $\vec x_1, \ldots, \vec x_k$. 
Then we consider the $\Z^k$-copies of the cut $\Gamma$. So now each copy has $k$ coordinates, so write $\Gamma_{t_1, \ldots, t_k}$ with $(t_1, \ldots, t_k) \in \Z^k$. Then for each $i$-th edge of $\Gamma_{t_1, \ldots, t_k}$ is glued to the $i$-th edge of 
$\Gamma_{t_1 + (\vec x_1)_i, t_2 + (\vec x_2)_i, \ldots, t_k + (\vec x_k)_i}$ 
where $(\vec x_j)_l$ denotes the $l$-th entry of the vector $\vec x_j$. 
This defines a $\Z^k$-fold cover of $\Gamma$ we need to compute the Teichm\"uller polynomial.

\section{Acknowledgements}

We greatly appreciate Ahmad Rafiqi for many helpful discussions and
comments. In particular, we are indebted to Ahmad for the running
example in the paper whose Teichm\"uller polynomial was confirmed by
his computation. We also thank Erwan Lanneau for a lot of inspiring discussions.  
Finally we thank the anonymous referee for helpful comments which
greatly improved the readability of our paper. 

The first author was partially supported by the ERC Grant Nb. 10160104. \\[10pt]

\newpage
\appendix
\section{Odd-block matrices with entries bigger than 1}  \label{appendix}
\begin{center}
\author{KyeongRo Kim and TaeHyouk Jo}
\end{center}

In the main text of the paper, the authors provided an algorithm to
compute the Teichm\"uller polynomial for odd-block surfaces, and the
odd-block surfaces are constructed as described in Section
\ref{sec:oddblocksurface} following \cite{BRW16}. One of the
limitations of the construction of odd-block surfaces is that the given odd-block
matrix is assumed to have only 0, 1 entires. The main purpose of this
assumption is to guarantee the uniqueness of the corresponding
piecewise-linear map $h_M$. See Figure \ref{fig:nonunique} for an example of an odd-block matrix with entries bigger
than 1 where $h_M$ is not unique.
\begin{figure}

\includegraphics[width=0.4\textwidth]{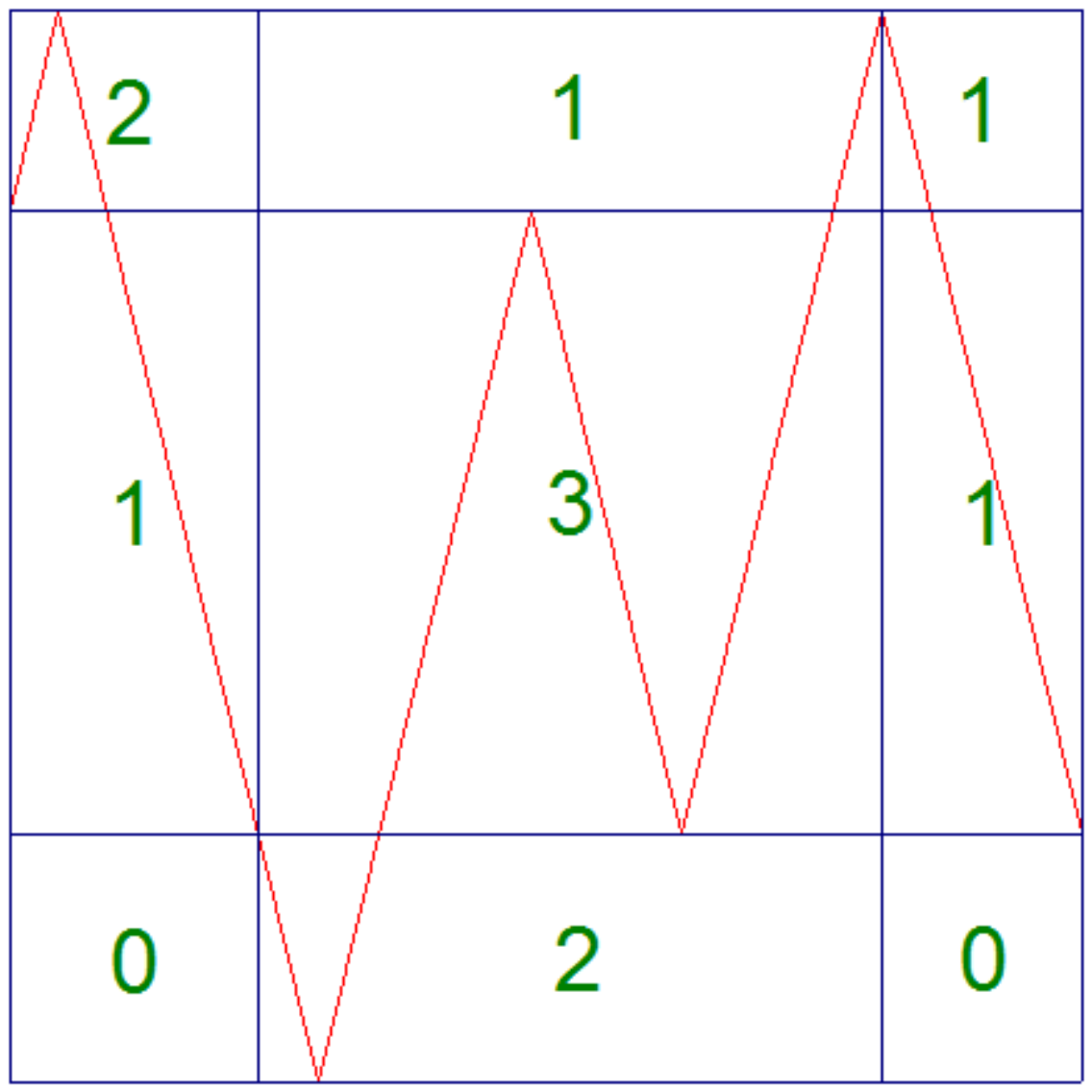}  \includegraphics[width=0.4\textwidth]{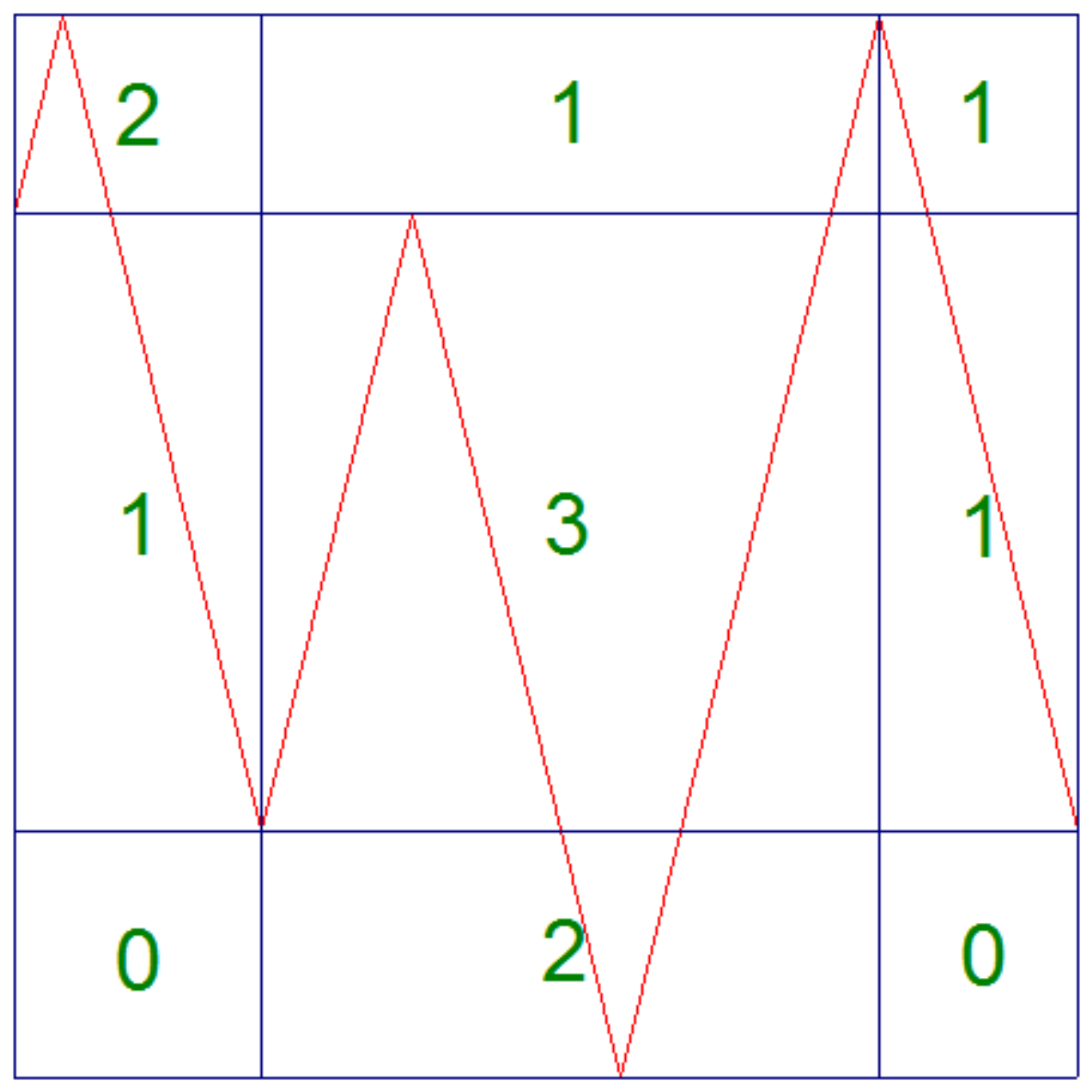}
\caption{The piecewise-linear map $h_M$ is not uniquely determined in
  the above example.} 
\label{fig:nonunique}
\end{figure}

On the other hand, for any given
odd-block matrix, one can obtain an odd-block matrix with
$\{0,1\}$-entries with essentially the same information. More
precisely, we prove the following theorem. 

\begin{thm}
\label{thm:appendix}
Let $M$ be an $n \times n$ non-singular, aperiodic, odd-block, nonnegative integral matrix. For each choice of the piecewise-linear map $h_M$, then there exists an aperiodic, odd-block matrix $N$ with only $\{0, 1\}$-entries such that $h_N$ coincides with $h_M$. Furthermore, the leading eigenvalue of $N$ is the leading eigenvalue of $M$.
\end{thm}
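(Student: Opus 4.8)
To prove Theorem~\ref{thm:appendix}, the plan is to realize $N$ as the incidence matrix of the very map $h_M$ one started with, only taken with respect to a refinement of the marked partition $\{x_0,\dots,x_n\}$ that separates all the folding points of $h_M$. Recall from Section~\ref{sec:oddblocksurface} that $h_M$ is a continuous piecewise-linear self-map of $[0,1]$ of constant slope $\pm\lambda$, where $\lambda$ is the leading eigenvalue of $M$, and that, by the grid construction, $h_M$ carries the partition points $\{x_0,\dots,x_n\}$ into themselves (the graph of $h_M$ meets each vertical grid line at a horizontal grid line) while every break point of $h_M$, being a turning point of the graph on a horizontal grid line, is likewise sent into $\{x_0,\dots,x_n\}$. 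Let $C$ be the finite set of break points of $h_M$ and put $P=\{x_0,\dots,x_n\}\cup C=\{p_0<p_1<\dots<p_m\}$; by what was just said $P$ is a finite $h_M$-invariant set containing $0$, $1$ and every break point of $h_M$. I would then take $N$ to be the $m\times m$ incidence matrix of $h_M$ with respect to the partition $J_k=[p_{k-1},p_k]$ and verify it has the stated properties.

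The combinatorial properties come almost for free. Since $J_j$ contains no break point of $h_M$ in its interior, $h_M$ is monotone on $J_j$, so $h_M(J_j)$ is the interval joining $h_M(p_{j-1})$ and $h_M(p_j)$, both in $P$; hence it is a union of consecutive $J_i$'s, each covered exactly once, so $N$ has entries in $\{0,1\}$ and the nonzero entries of each column of $N$ form one consecutive block. Defining $\phi\colon\{0,\dots,m\}\to\{0,\dots,m\}$ by $h_M(p_j)=p_{\phi(j)}$, one reads off immediately that $N_{ij}\ne 0$ precisely when $\min(\phi(j-1),\phi(j))<i\le\max(\phi(j-1),\phi(j))$; since $N$ is a $\{0,1\}$-matrix this yields both parts of the odd-block condition, so $N$ is an odd-block $\{0,1\}$-matrix. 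As $h_M$ has constant slope $\lambda$, the positive vector $\mathbf v=(|J_1|,\dots,|J_m|)$ satisfies $N^{T}\mathbf v=\lambda\mathbf v$, so by Perron--Frobenius the leading eigenvalue of $N$ is $\lambda$, i.e.\ the leading eigenvalue of $M$; and after $L^1$-normalization these lengths are exactly the interval lengths that the recipe of Section~\ref{sec:oddblocksurface} attaches to $N$. Since that recipe determines the piecewise-linear map of a $\{0,1\}$ odd-block matrix uniquely, and $h_M$ is itself piecewise-linear, monotone on each $J_j$, and realizes the covering pattern $N$, it follows that $h_N=h_M$.

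The one genuinely delicate point, which I expect to be the main obstacle, is that $N$ is aperiodic: an arbitrary Markov refinement of a primitive matrix need not be primitive (a refinement can split off an invariant sub-family of intervals), so one must use that this refinement comes from the transitive map $h_M$ rather than from an abstract combinatorial splitting. Since $M$ is aperiodic, $h_M$ is topologically transitive (in fact mixing); being piecewise-monotone of constant slope $>1$, $h_M$ is therefore locally eventually onto, so for each $k$ there is $t_k$ with $h_M^{t_k}$ mapping the interior of $J_k$ onto $[0,1]$, and since $h_M$ is surjective this persists for larger exponents. Taking $t\ge\max_k t_k$ gives $h_M^{t}(J_k)=[0,1]$ for all $k$, i.e.\ $N^{t}>0$, so $N$ is primitive and in particular aperiodic, finishing the proof. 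The heart of the matter is thus carrying aperiodicity across the refinement, for which the cleanest tool is the locally-eventually-onto property of transitive constant-slope interval maps; I would also observe in passing that non-singularity of $M$ need not be inherited by $N$ --- refining the full tent map already gives $\left(\begin{smallmatrix}1&1\\1&1\end{smallmatrix}\right)$ --- but the theorem neither claims nor uses it.
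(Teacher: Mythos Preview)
Your proof is correct and follows the same construction as the paper: adjoin the critical (break) points of $h_M$ to the original partition $\{x_0,\dots,x_n\}$ and take $N$ to be the incidence matrix of $h_M$ with respect to this refinement; the paper likewise notes that the resulting matrix is typically singular. For the aperiodicity step you flag as delicate, the paper gives an argument more elementary than your locally-eventually-onto appeal: since every point of the refined partition maps into $\{x_0,\dots,x_n\}$, each refined interval $J_k$ maps under $h_M$ onto some $[x_j,x_{k'}]$ with $j<k'$, and then aperiodicity of $M$ itself immediately forces $h_M^p(J_k)=[0,1]$ for large $p$.
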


Let $M$ be a matrix as in Theorem \ref{thm:appendix}. For instance,
one can consider an example shown in the left part of Figure \ref{fig:generaloddblock}. We show that there is a canonical way to convert $M$ into another
aperiodic odd-block matrix $\overline{M}$ with only entries $0$ and
$1$, having the same leading eigenvalue (say $\lambda$). 

\begin{figure}

\includegraphics[width=0.4\textwidth]{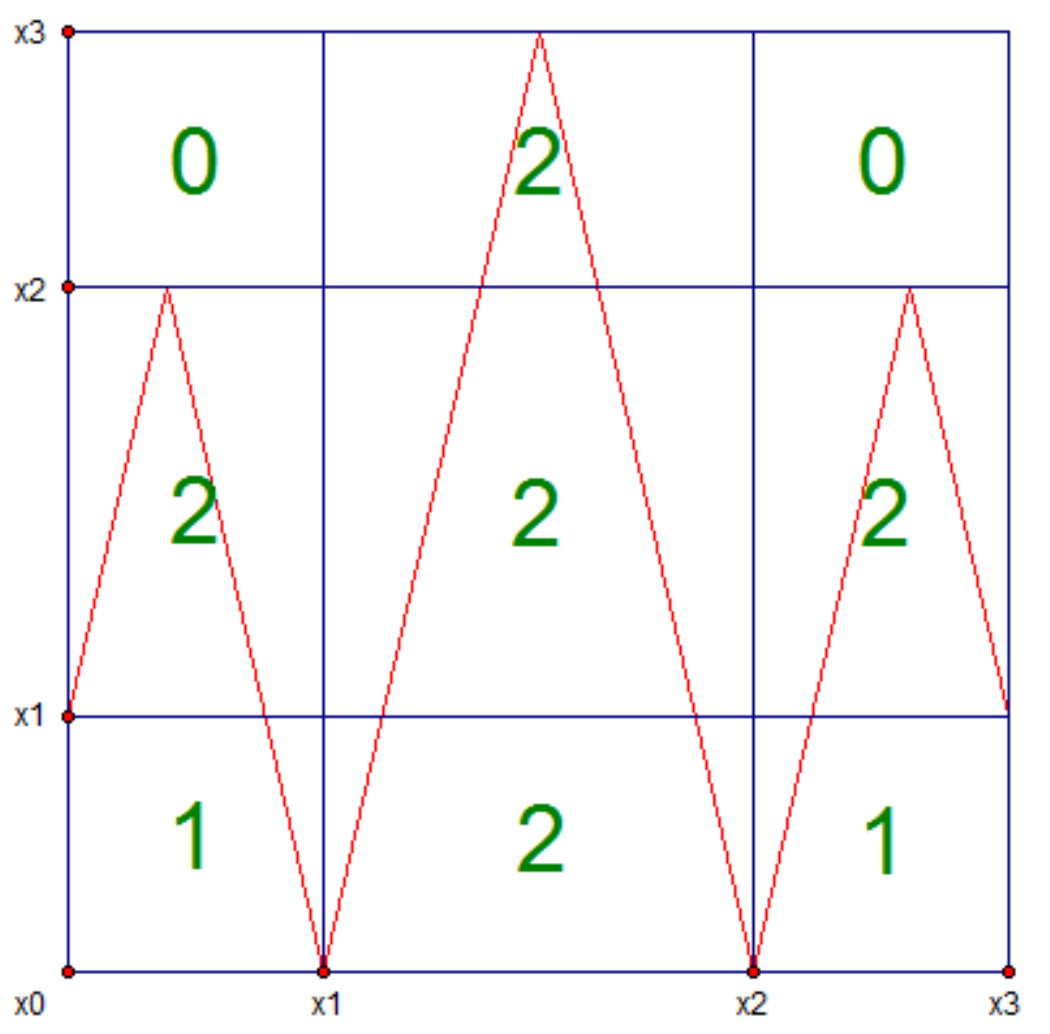}\includegraphics[width=0.4\textwidth]{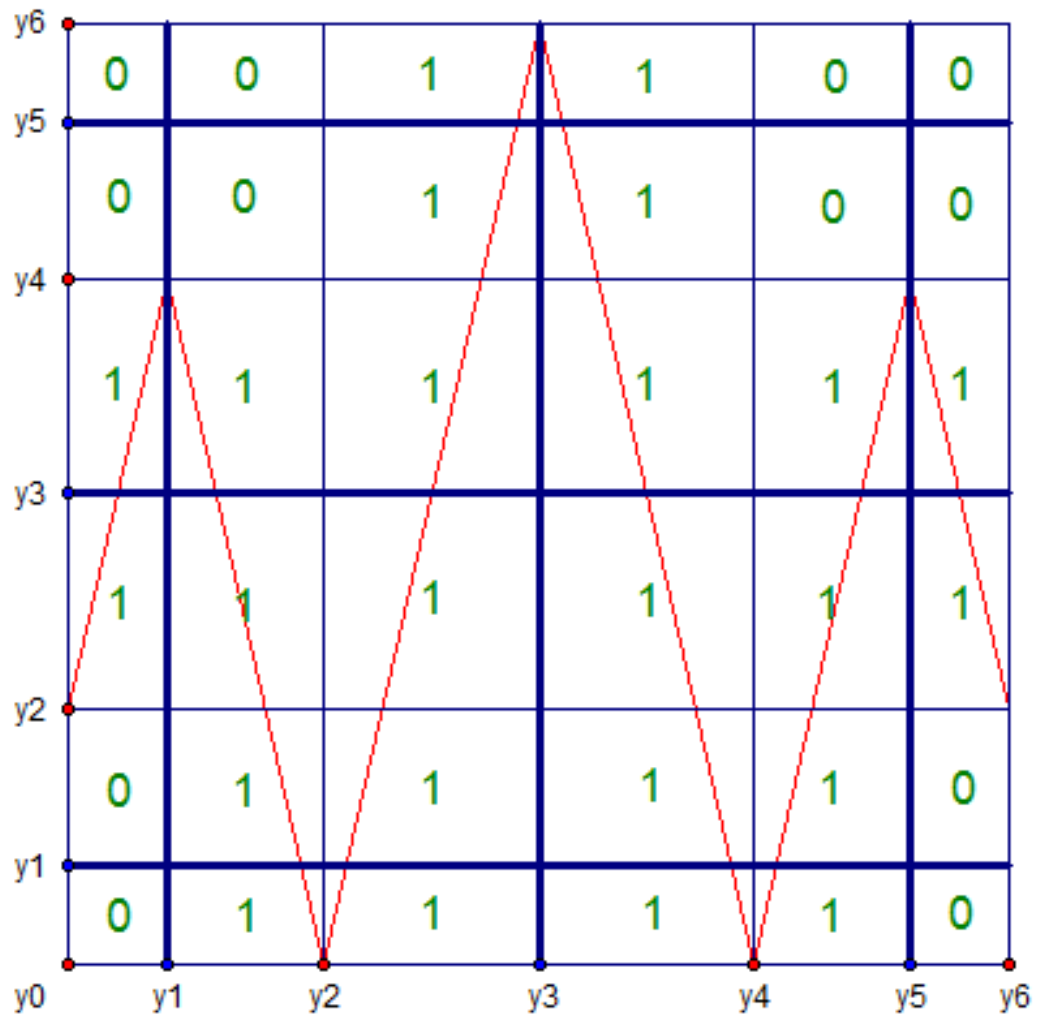}
\caption{\textbf{Left.} An odd-block matrix $M$ with a corresponding piecewise-linear
map $h_M$. \textbf{Right.} A matrix obtained by splitting the matrix on the left. } 
\label{fig:generaloddblock}
\end{figure}

Let $v$ be the $L^1$-normalized eigenvector of $M^T$ for the leading
eigenvalue. As we did in section 2, we get a partition $P=\lbrace x_0 , x_1, \dotsi , x_n \rbrace $ of $[0,1]$ so that $v_i=x_i-x_{i-1}$ for each $i=1,\dotsi,n$. Let's consider the $n\times n$ grid diagram on $[0,1]\times [0,1]$ generated by the partition $P$. Each $(i,j)$ box corresponds to $M_{ij}$ so that M is flipped upside down. Because $M$ is odd-block, it is always possible to draw a graph of piecewise-linear map $h_M$ so that the number of line segments of $h_M$ in each box is the same with the corresponding entry of $M$. As we draw on the grid diagram, the slopes of $h_M$ are either $\lambda$ or $-\lambda$. There may be few possible graphs with that property, but one can choose any of them. Note that the conversion depends on the choice of the graph.

Now we are in a position to convert $M$. Let $\overline{P}=\lbrace y_0
, y_1, \dotsi , y_n \rbrace $ be the union of $P$ and  the set of all
critical points of $h_M$. Then since the post-critical set of $h_M$ is
contained in $P$, $h_M(\overline{P})\subset \overline{P}$ and so
$\overline{P}$ is invariant under $h_M$. The extended incident matrix,
say $\overline{M}$, of $h_M$ associated with $\overline{P}$ is the
desired converted matrix. See the right part of Figure
\ref{fig:generaloddblock} for a resulting matrix $\overline{M}$
of this process. Note that $\overline{M}$ is inevitably singular because of duplicated
rows. 

Each entry of $\overline{M}$ is $0$ or $1$. Since $\overline{P}$
includes all critical points of $h_M$. The vector
$w=(w_1,\dotsi,w_m)$, $w_i=y_i-y_{i-1}$ for each $i=1,\dotsi,m$ is the
$L^1$-normalized eigenvector of $\overline{M}^T$ for $\lambda$. 
This is because the equation $(\overline{M}^Tw)_i=\lambda w_i$ simply
represents the length relation between $[y_{i-1},y_i]$ and
$h_M([y_{i-1},y_i])$. 

To prove $\overline{M}$ is aperiodic, we use following fact: 
For positive integer $p$, $i$-th column of $\overline{M}^p$ is
positive if and only if $h_M^p([y_{i-1},y_i])=[0,1]$. Since $M$ is
aperiodic and $h_M([y_{i-1},y_i])=[x_j, x_k]$ for some $j<k$,
$h_M^p([y_{i-1},y_i])$ eventually covers $[0,1]$ as p grows. Thus
$\overline{M}$ is aperiodic. Finally, from the Perron-Frobenius
theorem, we conclude that $\lambda$ is the leading eigenvalue of
$\overline{M}$ as it is the associated eigenvalue of the positive
eigenvector $w$ of the aperiodic matrix $\overline{M}$.

\vspace{.3in}

Department of Mathematical Sciences

KAIST

291 Daehak-ro, Yuseong-gu

Daejeon 34141, South Korea

E-mail: hrbaik@kaist.ac.kr

\vspace{.3in}

Department of Mathematics

Rutgers University

Hill Center - Busch Campus

110 Frelinghuysen Road

Piscataway, NJ 08854-8019, USA

E-mail: cwu@math.rutgers.edu

\vspace{.3in}

\textit{KyeongRo Kim:}

Department of Mathematical Sciences

KAIST

291 Daehak-ro, Yuseong-gu

Daejeon 34141, South Korea

E-mail: cantor14@kaist.ac.kr

\vspace{.3in}

\textit{TaeHyouk Jo:}

Department of Mathematical Sciences

KAIST

291 Daehak-ro, Yuseong-gu

Daejeon 34141, South Korea

E-mail: lyra95@kaist.ac.kr


\begin{thebibliography}{9}
%
%
\bibitem{AKHR15} Y. Algom-Kfir, E. Hironaka, K. Rafi. (2015)
  \emph{Digraphs and cycle polynomials for free-by-cyclic groups.}
  Geom. Topol. 19 (2015), no. 2, 1111--1154. \\

\bibitem{BRW16} H. Baik, A. Rafiqi and C. Wu. (2016). \emph{Constructing pseudo-Anosov maps with given dilatations}, Geom. Dedicata. 180(1): 39--48. \\

\bibitem{DKL15} S. Dowdall, I. Kapovich, C. Leininger. (2015) \emph{Dynamics on free-by-cyclic-groups.} Geom. Topol.,
19(5):2801--2899.\\

\bibitem{DKL16} S. Dowdall, I. Kapovich, C. Leininger. \emph{McMullen
    polynomials and Lipschitz flows for free-by-cyclic
    groups}. arXiv:1310.7481 \\

\bibitem{FLM11} B. Farb, C. Leininger, D. Margalit. (2011) \emph{Small dilatation pseudo-Anosov homeomorphisms and
  3-manifolds.} Adv. Math. 228, no. 3, 1466–1502. \\

\bibitem{F53} R. Fox. (1953) \emph{Free differential calculus. I:
    Derivation in the free group ring.} Ann. Math.: 547--560.\\

\bibitem{H97} E. Hironaka. (1997) \emph{Alexander stratifications of character varieties.} Annales de l'institut Fourier. Vol. 47. No. 2.\\

\bibitem{Hiro10} E. Hironaka. (2010) \emph{Small dilatation mapping classes coming
  from the simplest hyperbolic braid.} Algebr. Geom. Topol. 10, no. 4,
2041–2060. \\

\bibitem{Hiro14} E. Hironaka. (2014) \emph{Penner sequences and asymptotics of minimum dilatations for subfamilies of the mapping class group.} Topology
  Proc. 44, 315–324. \\ 

\bibitem{KKT13} E. Kin, S. Kojima, M. Takasawa. (2013) \emph{Minimal
  dilatations of pseudo-Anosovs generated by the magic 3-manifold and
  their asymptotic behavior.} Algebr. Geom. Topol. 13, no. 6,
3537–3602. \\

\bibitem{LV14} E. Lanneau, F. Valdes. (2014) \emph{Computing the Teichm{\"u}ller polynomial.} arXiv:1412.3983\\

\bibitem{McM00} C. McMullen. (2000) \emph{Polynomial invariants for
    fibered 3-manifolds and Teichm\"uller geodesics for
    foliations}. Ann. Sci. \'Ec. Norm. Sup\'er, 33(4): 519--560. \\

\bibitem{McM02} C. McMullen. (2002) \emph{The Alexander polynomial of
    a 3-manifold and the Thurston norm on
    cohomology}. Ann. Sci. \'Ec. Norm. Sup\'er. 35(2): 153--171. \\

\bibitem{McM15} C. McMullen. (2015) \emph{Entropy and the clique
    polynomial}. J Topol. 8 (1): 184--212.\\

\bibitem{Sun15} H. Sun. (2015) \emph{A transcendental invariant of pseudo-Anosov maps.}
J. Topol. 8 (2015), no. 3, 711–743.\\ 

\bibitem{ThursEnt} W. Thurston. (2014) \emph{Entropy in dimension one.} arXiv:1402.2008.\\



\end{thebibliography}
\end{document}